\documentclass[11pt]{amsart}
\usepackage{amssymb}
\usepackage{mathrsfs}
\usepackage{enumerate}
\makeatletter
\@namedef{subjclassname@2010}{%
  \textup{2010} Mathematics Subject Classification}
\makeatother

\newtheorem{theorem}{Theorem}[section]
\newtheorem{proposition}{Proposition}[section]
\newtheorem{corollary}{Corollary}[section]
\newtheorem{definition}{Definition}[section]
\newtheorem{remark}{Remark}[section]

\begin{document}

\title[Robustness of Controlled Fusion Frames under Erasures]{Robustness of Controlled Fusion Frames under Erasures of Subspaces and the Approximation Operator}

\author[R. Ahmadi]{Reza Ahmadi$^*$}
\address{Institute of Fundamental Sciences\\University of Tabriz\\, Iran\\}
\email{rahmadi@tabrizu.ac.ir}

\author[Gh. Rahimlou]{Gholamreza Rahimlou}
\address{Department of Mathematics, Technical and Vocational University (TVU), Tehran,
, Iran\\}
\email{ghrahimlo@tvu.ac.ir}

\author[V. Sadri]{Vahid Sadri}
\address{Department of Mathematics,\\Technical and Vocational University (TVU), Tehran, Iran.}
\email{vahidsadri57@gmail.com}

\subjclass[2010]{Primary 42C15; Secondary 46C99, 41A58}
\keywords{Controlled frame, controlled-fusion frame, trace-class operator, Erasures.\\
*Corresponding Author}
\maketitle

\begin{abstract}
Controlled frames which presented  to improve the numerical output of iterative algorithms for inverting the frame operator,  have been  introduced by Balazs and et al.  Also, these frames are used by Bogdanova and et al. for spherical wavelets.  In this note, we aim to study some new results about controlled fusion frames and behavior of them under erasures. Finally, we will introduce the approximation operator for these frames and a result of the composition of  the operators of two controlled fusion frames will be proved.
\end{abstract}

\section{Introduction}
Frames which are an expansion of bases in Hilbert spaces, were first proposed
by Duffin and Schaeffer to deal with  nonharmonic Fourier series in \cite{Duffin}. Indeed, frames are sets of vectors in a Hilbert space that yield one representation for each vector in the space, but which  may have many different representations for a given vector.
 They play a fundamental role in mathematics, filter bank theory, coding and communications, signal processing, system modeling and model sensor networks (e. g. \cite{Musazadeh, Strohmer, Feichtinger, cassaza}) and they have some generalizations as fusion frames, g-frames, c-frames, $K$-frames, controlled frame, woven and etc.

The notation of fusion frames have been introduced by Casazza and Kutyniok in \cite{cas1}, provide a means to improve robustness or develop feasible reconstruction algorithms. They were able to construct robustness of these frames under erasures in \cite{cas2} by using the methods suggested by Bodman \cite{bod} for Parseval frames under the term weighted projective resolution of the identity for optimal transmission of quantum states and for packet encoding.

Recently, controlled frames have been presented by Balazs and et al. in \cite{Balazs} to improve the numerical efficiency of interactive algorithms for inverting the frame operator on Hilbert spaces. After, controlled frames have been studied for another kind of frames in \cite{Hua, Musazadeh, Reza, Rahimi}.

This manuscript is organized as follows: In Section 2, we study a new identity for the eigenvalues of the controlled frame operator and review the notation of controlled fusion frames with some results about these operators. In Section 3, we focus on the study of robustness of controlled fusion frames under erasures. Finally, we will present an interesting result about the composition of  the operators of two controlled fusion frames.

Throughout this paper, $H$ and $K$ are separable Hilbert spaces, $\mathcal{B}(H,K)$ is the family of the all bounded linear operators on $H$ into $K$. When $H=K$ we use $\mathcal{B}(H)$ instead of $\mathcal{B}(H,H)$. We denote $\mathcal{GL}(H)$ as the set of all bounded linear operators which have bounded inverses and $\mathcal{GL}^{+}(H)$ is the set of all positive operators in $\mathcal{GL}(H)$.
Also, we define $\pi_W$ as the orthogonal projection from $H$ onto a closed subspace $V\subseteq H$.
%%%%%%%%%%%%%%%%%%%%%%%%%%%%%%%%%%
\section{Review of Controlled  Frames}
In this section, we review the notations of controlled frames, fusion frames and present a useful identity about the eigenvalues of the controlled  frame operator. Throughout this paper, $C, C'\in\mathcal{GL}(H)$  and  $\Bbb I\subset\Bbb Z$.
\begin{definition}
Let $\lbrace f_{i}\rbrace_ {i\in\Bbb I}$ be  members of $H$.  We call $\lbrace f_{i}\rbrace_ {i\in\Bbb I}$  a $(C,C')$-controlled frame  for $H$ if there exist constants $0< A \leq B< \infty$ such that for each $f \in H$,
\begin{align}\label{con1}
A \Vert f \Vert^{2} \leq \sum _{i\in\Bbb I}\langle C'f,f_i\rangle\langle f_i, Cf\rangle\leq B \Vert f\Vert^{2}.
\end{align}
\end{definition}
In particular, we say $\lbrace f_{i}\rbrace_ {i\in\Bbb I}$  a $C$-controlled frame for $H$ if there exist constants $0< A \leq B< \infty$ such that for each $f \in H$,
\begin{align}
A \Vert f \Vert^{2} \leq \sum _{i\in\Bbb I}\langle f,f_i\rangle\langle Cf_i, f\rangle\leq B \Vert f\Vert^{2}.
\end{align}
When $A=B=1$, we say $\lbrace f_{i}\rbrace_ {i\in\Bbb I}$ a Parseval $(C,C')$-controlled frame and if the right hand of \eqref{con1} holds, $\lbrace f_{i}\rbrace_ {i\in\Bbb I}$ is called a $(C,C')$-controlled Bessel sequence. In this case, the controlled  frame operator is defined by
$$S_{CC'}f=\sum_{i\in\Bbb I}\langle C'f, f_i\rangle C^*f_i,\qquad f\in H.$$
The next result gives a new identity about the eigenvalues of $S_{CC'}$.
\begin{proposition}\label{eigin1}
Let $\lbrace f_{i}\rbrace_ {i\in\Bbb I}$ be a  $(C,C')$-controlled frame for $H$ where $\dim H=n$ and $\{\lambda_i\}_{i=1}^{n}$ be the eigenvalues of the operator $S_{CC'}$. Then
$$\sum_{i=1}^{n}\lambda_i=\sum_{i\in\Bbb I}\langle C^*f_i, C'^*f_i\rangle.$$
\end{proposition}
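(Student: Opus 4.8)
The plan is to reduce the left-hand side to a trace and then evaluate that trace against an orthonormal basis. Since $\dim H=n<\infty$, the operator $S_{CC'}$ is represented by an $n\times n$ matrix in any basis, so the sum of its eigenvalues counted with algebraic multiplicity equals $\operatorname{tr}(S_{CC'})$; I emphasize that this uses only the general linear-algebra identity ``sum of eigenvalues $=$ trace'' (via the characteristic polynomial or Schur triangularization), so no self-adjointness or positivity of $S_{CC'}$ is needed. Thus $\sum_{i=1}^{n}\lambda_i=\operatorname{tr}(S_{CC'})$.

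Next I would fix an orthonormal basis $\{e_j\}_{j=1}^{n}$ of $H$ and compute the trace directly from the definition $S_{CC'}f=\sum_{i\in\Bbb I}\langle C'f,f_i\rangle C^*f_i$:
$$\operatorname{tr}(S_{CC'})=\sum_{j=1}^{n}\langle S_{CC'}e_j,e_j\rangle=\sum_{j=1}^{n}\sum_{i\in\Bbb I}\langle C'e_j,f_i\rangle\langle C^*f_i,e_j\rangle=\sum_{j=1}^{n}\sum_{i\in\Bbb I}\langle e_j,C'^*f_i\rangle\langle C^*f_i,e_j\rangle,$$
where in the last step I moved $C'$ to its adjoint. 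For each fixed $j$ the inner series over $i$ converges because $\{f_i\}_{i\in\Bbb I}$ is a $(C,C')$-controlled Bessel sequence (indeed it is precisely $\langle S_{CC'}e_j,e_j\rangle$, which is finite), so the finite outer sum over $j$ may be interchanged with the sum over $i\in\Bbb I$ without any convergence issue.

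Finally, for each fixed $i$ I would apply the Parseval relation in the basis $\{e_j\}$: for all $u,v\in H$, $\langle u,v\rangle=\sum_{j=1}^{n}\langle u,e_j\rangle\langle e_j,v\rangle$. Taking $u=C^*f_i$ and $v=C'^*f_i$ collapses the inner sum to $\langle C^*f_i,C'^*f_i\rangle$, and summing over $i\in\Bbb I$ gives $\operatorname{tr}(S_{CC'})=\sum_{i\in\Bbb I}\langle C^*f_i,C'^*f_i\rangle$, which is the asserted identity. The computation is essentially routine; the only points that deserve a word of care are the justification of the interchange of summations (handled by the Bessel bound, as above) and keeping the adjoints on the correct side, so I do not anticipate a genuine obstacle.
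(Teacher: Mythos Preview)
Your proposal is correct and follows essentially the same route as the paper's proof: compute $\operatorname{tr}(S_{CC'})$ with respect to an orthonormal basis, expand using the definition of $S_{CC'}$, swap the order of summation, and collapse the inner sum via Parseval. Your write-up is in fact a bit more careful than the paper's in justifying why $\sum_{i=1}^{n}\lambda_i$ equals the trace and why the interchange of sums is legitimate.
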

\begin{proof}
Suppose that  $\{e_i\}_{i\in\Bbb I}$ is an orthonormal basis for $H$, then
\begin{align*}
\sum_{i=1}^{n}\lambda_i&=\sum_{i=1}^{n}\lambda_i\Vert e_i\Vert^2\\
&=\sum_{i=1}^{n}\langle S_{CC'}e_i, e_i\rangle\\
&=\sum_{i=1}^{n}\sum_{j\in\Bbb I}\big\langle\langle C'e_i, f_j\rangle C^*f_j, e_i\big\rangle\\
&=\sum_{j\in\Bbb I}\sum_{i=1}^{n}\langle e_i,C'^*f_j\rangle\langle C^*f_j,e_i\rangle\\
&=\sum_{j\in\Bbb I}\langle C^*f_j, C'^*f_j\rangle.
\end{align*}
\end{proof}
\begin{corollary}\label{eigin2}
Let $\lbrace f_{i}\rbrace_ {i\in\Bbb I}$ be a Parseval $(C,C')$-controlled frame for $H$ where $\dim H=n$. Then
$$\sum_{i\in\Bbb I}\langle C^*f_i, C'^*f_i\rangle=n.$$
\end{corollary}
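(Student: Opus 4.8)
The plan is to deduce this directly from Proposition \ref{eigin1}, using nothing beyond the defining inequality \eqref{con1} specialized to $A=B=1$. By Proposition \ref{eigin1}, the quantity $\sum_{i\in\Bbb I}\langle C^*f_i, C'^*f_i\rangle$ equals $\sum_{i=1}^{n}\lambda_i$, the sum of the eigenvalues of the controlled frame operator $S_{CC'}$; equivalently, it is the trace of $S_{CC'}$. So it suffices to show that this trace equals $n$.

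First I would fix an orthonormal basis $\{e_i\}_{i=1}^{n}$ of $H$ and recall that $\mathrm{tr}(S_{CC'})=\sum_{i=1}^{n}\langle S_{CC'}e_i, e_i\rangle$, independently of the chosen basis. Next, expanding $\langle S_{CC'}e_i, e_i\rangle$ exactly as in the proof of Proposition \ref{eigin1} gives $\langle S_{CC'}e_i, e_i\rangle=\sum_{j\in\Bbb I}\langle C'e_i, f_j\rangle\langle f_j, Ce_i\rangle$, which is precisely the middle term of \eqref{con1} evaluated at $f=e_i$. Since the frame is Parseval, this middle term equals $\Vert e_i\Vert^{2}=1$ for every $i$. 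Summing over $i=1,\dots,n$ yields $\mathrm{tr}(S_{CC'})=n$, and combining this with Proposition \ref{eigin1} gives the stated identity.

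There is essentially no obstacle in this argument; the only point deserving a line of care is the identification of $\sum_{i=1}^{n}\lambda_i$ with $\sum_{i=1}^{n}\langle S_{CC'}e_i, e_i\rangle$ for an arbitrary orthonormal basis, which holds in finite dimensions and is already implicit in the proof of Proposition \ref{eigin1}. Alternatively, one may argue even more directly: the Parseval hypothesis says $\langle S_{CC'}f, f\rangle=\Vert f\Vert^{2}$ for all $f\in H$, so on an orthonormal eigenbasis of $S_{CC'}$ each eigenvalue equals $1$, whence $\sum_{i=1}^{n}\lambda_i=n$ and Proposition \ref{eigin1} concludes the proof.
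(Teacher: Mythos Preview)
Your argument is correct and is exactly the justification the paper leaves implicit: the corollary is stated without proof, as an immediate consequence of Proposition~\ref{eigin1}, and your computation $\langle S_{CC'}e_i,e_i\rangle=\sum_{j\in\Bbb I}\langle C'e_i,f_j\rangle\langle f_j,Ce_i\rangle=\Vert e_i\Vert^{2}=1$ is precisely the missing line. One small caveat on your alternative route: the existence of an \emph{orthonormal} eigenbasis for $S_{CC'}$ is not established in this generality, so the trace argument you give first is the safer (and already sufficient) one.
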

Next, we focus on controlled fusion frames for a Hilbert space which have been introduced by Khosravi and Musazadeh in \cite{Khosravi}.
\begin{definition}
Let $\lbrace W_{i}\rbrace_ {i\in\Bbb I}$ be a collection of closed subspace in $H$ and  $\lbrace v_{i}\rbrace_ {i\in\Bbb I}$ be a family of weights, i.e. $v_{i}>0$,  $i \in\Bbb I$. The sequence  $W=\lbrace (W_{i},v_{i})\rbrace _ {i \in \Bbb I}$ is called a  $(C,C')$-controlled fusion frame  for $H$ if there exist constants $0< A \leq B< \infty$ such that for all $f \in H$
\begin{align*}
A \Vert f \Vert^{2} \leq \sum _{i\in\Bbb I} v_{i}^{2} \langle \pi_{W_{i}} C^{\prime}f,\pi_{W_{i}} Cf \rangle  \leq B \Vert f\Vert^{2}.
\end{align*}
\end{definition}
Throughout this paper, $W=\lbrace (W_{i},v_{i})\rbrace _ {i \in I}$ unless otherwise stated. We call $W$ a tight $(C,C')$-controlled fusion frame, if $A=B$ and a Parseval $(C,C')$-controlled fusion frame provided $A=B=1$. We call $W$ is a $C^2$-controlled fusion frame  if $C=C'$.
If only the second inequality is required, We call $(C,C')$-controlled Bessel fusion sequence  with bound $B$.
If $W$ is a $(C,C')$-controlled fusion frame and $C^*\pi_{W_i}C'$ is a positive operator for each
$i\in\Bbb I$, then $C^*\pi_{W_i}C'=C'^*\pi_{W_i}C$ and we have
\begin{align*}
A \Vert f \Vert^{2} \leq \sum _{i\in\Bbb I} v_{i}^{2} \Vert(C^* \pi_{W_{i}} C')^{\frac{1}{2}}f\Vert^2 \leq B \Vert f\Vert^{2}.
\end{align*}
We define the controlled analysis operator by (for more details, we refer to \cite{Khosravi})
\begin{align*}
T_{W}&:H \rightarrow \mathcal{K}_{2,W} \\
T_{W}f=&\lbrace v_{i}(C^{*}\pi_{W_{i}}C')^{\frac{1}{2}}f\rbrace_{i\in\Bbb I},
\end{align*}
where
\begin{align*}
\mathcal{K}_{2,W}:=\big\lbrace \lbrace v_i(C^{*}\pi_{W_{i}} C')^{\frac{1}{2}}f\rbrace_{i\in\Bbb I} \ : \ f\in H\big\rbrace \subset (\bigoplus_{i \in\Bbb I} H)_{l^{2}}.
\end{align*}
It is easy to see that $\mathcal{K}_{2,W}$ is closed and $T_{W}$ is well defined. Moreover $T_{W}$ is a bounded linear operator with the adjoint operator $T ^{*}_{W}$ (the controlled synthesis operator) defined by
\begin{align*}
T^{*}_{W}&:\mathcal{K}_{2,W} \rightarrow H \\
T ^{*}_{W}\lbrace v_i(C^{*}&\pi_{W_{i}} C')^{\frac{1}{2}}f\rbrace_{i\in\Bbb I}=\sum _{i\in\Bbb I} v_{i}^{2}C^{*}\pi_{W_{i}} C'f.
\end{align*}
Therefore, we define the controlled fusion frame operator $S_{W}$ on $H$ by
\begin{align*}
S_{W}f=T ^{*}_{W}T_{W}(f)=\sum _{i\in\Bbb I} v_{i}^{2}C^{*}\pi_{W_{i}} C'f.
\end{align*}
It is easy to check that $S_W$ is a well defined and
$$AId_H\leq S_W\leq B Id_H,$$
hence, $S_W$ is a bounded, invertible, self-adjoint and positive linear operator. The next result provides some conditions to get a controlled fusion frame.
\begin{theorem}\label{char1}
Let $C^*\pi_{W_i}C'$ is a positive operator for each
$i\in\Bbb I$. A sequence  $W=\lbrace (W_{i},v_{i})\rbrace _ {i \in \Bbb I}$ is  a  $(C,C')$-controlled fusion frame  for $H$ with bounds $A$ and $B$ if and only if the operator
\begin{align*}
T^{*}_{W}&:\mathcal{K}_{2,W} \rightarrow H \\
T ^{*}_{W}\lbrace v_i(C^{*}&\pi_{W_{i}} C')^{\frac{1}{2}}f\rbrace_{i\in\Bbb I}=\sum _{i\in\Bbb I} v_{i}^{2}C^{*}\pi_{W_{i}} C'f,
\end{align*}
is a well-defined, bounded and surjective with $\Vert T^*_W\Vert\leq\sqrt{B}$.
\end{theorem}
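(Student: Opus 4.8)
The plan is to prove the two implications separately, using the standard correspondence between fusion-frame inequalities and operator-theoretic properties of the synthesis map $T_W^*$.

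First I would handle the forward direction. Assume $W$ is a $(C,C')$-controlled fusion frame with bounds $A,B$ and that each $C^*\pi_{W_i}C'$ is positive. Since the positivity gives $C^*\pi_{W_i}C' = (C^*\pi_{W_i}C')^{1/2}(C^*\pi_{W_i}C')^{1/2} = C'^*\pi_{W_i}C$, the analysis operator $T_W$ and its adjoint $T_W^*$ are already set up in the excerpt, and $S_W = T_W^* T_W$ satisfies $A\,\mathrm{Id}_H \leq S_W \leq B\,\mathrm{Id}_H$. Boundedness of $T_W^*$ with $\Vert T_W^*\Vert = \Vert T_W\Vert \leq \sqrt{B}$ follows from $\Vert T_W f\Vert^2 = \langle S_W f,f\rangle \leq B\Vert f\Vert^2$. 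For surjectivity: the lower bound $A\,\mathrm{Id}_H \leq S_W = T_W^* T_W$ forces $S_W$ to be invertible on $H$ (it is positive, self-adjoint and bounded below), and since $\mathrm{ran}(S_W) \subseteq \mathrm{ran}(T_W^*)$, we conclude $\mathrm{ran}(T_W^*) = H$, i.e. $T_W^*$ is onto.

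For the converse, suppose $T_W^*$ is well-defined, bounded and surjective with $\Vert T_W^*\Vert \leq \sqrt{B}$. Well-definedness and the fact that $\mathcal{K}_{2,W}$ is closed (hence a Hilbert space) let us form the bounded adjoint $T_W = (T_W^*)^*$, and a direct computation identifies $T_W f = \{v_i (C^*\pi_{W_i}C')^{1/2} f\}_{i\in\Bbb I}$, so $\langle S_W f, f\rangle = \Vert T_W f\Vert^2 = \sum_{i\in\Bbb I} v_i^2 \langle \pi_{W_i}C'f, \pi_{W_i}Cf\rangle$. The upper bound $\Vert T_W\Vert = \Vert T_W^*\Vert \leq \sqrt{B}$ immediately gives the Bessel-type inequality with bound $B$. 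For the lower bound, surjectivity of $T_W^*$ together with the open mapping theorem yields a bounded right inverse, equivalently $T_W^* T_W = S_W$ is bounded below: $T_W^*$ surjective implies $T_W$ is bounded below, say $\Vert T_W f\Vert^2 \geq A\Vert f\Vert^2$ for some $A>0$, which is exactly the lower frame inequality.

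The main obstacle — and the only step requiring care — is the passage from ``$T_W^*$ surjective'' to ``$T_W$ bounded below.'' This is the standard fact that a bounded operator between Hilbert spaces has closed range and is surjective iff its adjoint is bounded below; I would invoke it via the closed range theorem (or give the short direct argument: if $T_W^*$ is surjective then $T_W^* T_W$ is a positive invertible operator since $\ker T_W = (\mathrm{ran}\,T_W^*)^\perp = \{0\}$ and $\mathrm{ran}(T_W^* T_W)$ is closed and dense, whence $\langle T_W^* T_W f, f\rangle \geq A\Vert f\Vert^2$ with $A = \Vert (T_W^* T_W)^{-1}\Vert^{-1}$). Everything else is bookkeeping with the explicit formulas for $T_W$, $T_W^*$ and $S_W$ already recorded in the excerpt, plus the positivity hypothesis that makes $C^*\pi_{W_i}C'$ factor through its square root.
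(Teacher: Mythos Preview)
Your proposal is correct and follows essentially the same strategy as the paper: boundedness of $T_W^*$ gives the upper Bessel bound, and surjectivity of $T_W^*$ yields the lower frame bound. The only cosmetic difference is that the paper phrases the lower-bound step via the pseudo-inverse $T_W^{*\dagger}$ (writing $\Vert f\Vert^4 = \vert\langle T_W^{*\dagger}f, T_W f\rangle\vert^2 \leq \Vert T_W^{*\dagger}\Vert^2 \Vert f\Vert^2 \Vert T_W f\Vert^2$), whereas you invoke the closed range theorem directly; these are the same argument in different clothing.
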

\begin{proof}
The sufficient condition is clear. For the opposite case, assume that the operator $T^*_{W}$ is a well-defined, bounded and surjective with $\Vert T^*_W\Vert\leq\sqrt{B}$ and $\Bbb J\subset\Bbb I$ such that $\vert\Bbb J\vert<\infty$. For each $f\in H$ we have,
\begin{align*}
\sum _{i\in\Bbb J} v_{i}^{2} \Vert(C^* \pi_{W_{i}} C')^{\frac{1}{2}}f\Vert^2&=\big\langle\sum _{i\in\Bbb J} v_{i}^{2}C^{*}\pi_{W_{i}} C'f,f\big\rangle\\
&=\big\langle T^*_{W}\lbrace v_i(C^{*}\pi_{W_{i}} C')^{\frac{1}{2}}f\rbrace_{i\in\Bbb J},f\big\rangle\\
&\leq\Vert T^*_{W}\Vert\big\Vert\lbrace v_i(C^{*}\pi_{W_{i}} C')^{\frac{1}{2}}f\rbrace_{i\in\Bbb J}\big\Vert\Vert f\Vert.
\end{align*}
Hence
$$\sum _{i\in\Bbb I} v_{i}^{2} \Vert(C^* \pi_{W_{i}} C')^{\frac{1}{2}}f\Vert^2\leq\Vert T^*_W\Vert^2\Vert f\Vert^2,$$
and this means that $W$ is a $(C,C')$-controlled Bessel fusion sequence with bound $B$. Since $T^*_W$ is bounded and surjective, so there exists the pseudo-inverse operator $T^{*\dagger}_W$ which $T^*_WT^{*\dagger}_W f=f$ for each $f\in H$. Suppose that $f\in H$ and we can write,
\begin{align*}
\Vert f\Vert^4&=\vert\langle T^*_WT^{*\dagger}_W f,f\rangle\vert^2
=\vert\langle T^{*\dagger}_W f,T_Wf\rangle\vert^2\leq\Vert T^{*\dagger}_W\Vert^2\Vert f\Vert^2\Vert T_W f\Vert^2.
\end{align*}
So, we conclude that
$$\Vert T^{*\dagger}_W\Vert^{-2}\Vert f\Vert^2\leq\sum _{i\in\Bbb I} v_{i}^{2} \Vert(C^* \pi_{W_{i}} C')^{\frac{1}{2}}f\Vert^2.$$
\end{proof}
\begin{theorem}
Let $C^*\pi_{W_i}C'$ is a positive operator for each
$i\in\Bbb I$. A sequence  $W=\lbrace (W_{i},v_{i})\rbrace _ {i \in \Bbb I}$ is  a  $(C,C')$-controlled fusion Bessel sequence  for $H$ if and only if $\sum _{i\in\Bbb J} v_{i}^{2}C^{*}\pi_{W_{i}} C'f$ converges for all $\lbrace v_i(C^{*}\pi_{W_{i}} C')^{\frac{1}{2}}f\rbrace_{i\in\Bbb I}\in\mathcal{K}_{2,W}$.
\end{theorem}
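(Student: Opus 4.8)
This is the controlled-fusion-frame analogue of the classical statement that a sequence is Bessel precisely when its synthesis operator is everywhere defined (and then automatically bounded), and I would prove the two implications separately, the converse being where the work lies. For the implication that a $(C,C')$-controlled Bessel fusion sequence $W$ with bound $B$ forces convergence, I would argue by the Cauchy criterion. Since $(C^{*}\pi_{W_{i}}C')^{\frac{1}{2}}$ is positive, hence self-adjoint, for any finite $\Bbb J\subset\Bbb I$, any $f\in H$ and any unit vector $g\in H$ one has $\big\langle\sum_{i\in\Bbb J}v_{i}^{2}C^{*}\pi_{W_{i}}C'f,g\big\rangle=\sum_{i\in\Bbb J}v_{i}^{2}\big\langle(C^{*}\pi_{W_{i}}C')^{\frac{1}{2}}f,(C^{*}\pi_{W_{i}}C')^{\frac{1}{2}}g\big\rangle$, and Cauchy--Schwarz in $\ell^{2}$ together with the Bessel bound applied to $g$ bounds the right-hand side by $\sqrt{B}\,\big(\sum_{i\in\Bbb J}v_{i}^{2}\Vert(C^{*}\pi_{W_{i}}C')^{\frac{1}{2}}f\Vert^{2}\big)^{\frac{1}{2}}$. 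Hence $\big\Vert\sum_{i\in\Bbb J}v_{i}^{2}C^{*}\pi_{W_{i}}C'f\big\Vert\le\sqrt{B}\,\big(\sum_{i\in\Bbb J}v_{i}^{2}\Vert(C^{*}\pi_{W_{i}}C')^{\frac{1}{2}}f\Vert^{2}\big)^{\frac{1}{2}}$, and because the Bessel inequality makes $\sum_{i\in\Bbb I}v_{i}^{2}\Vert(C^{*}\pi_{W_{i}}C')^{\frac{1}{2}}f\Vert^{2}$ finite, its tails over finite index sets vanish; so the partial sums $\sum_{i\in\Bbb J}v_{i}^{2}C^{*}\pi_{W_{i}}C'f$ form a Cauchy net in $H$ and converge.

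For the converse I would invoke the uniform boundedness principle. Fix finite sets $\Bbb J_{1}\subset\Bbb J_{2}\subset\cdots$ with $\bigcup_{n}\Bbb J_{n}=\Bbb I$, and for each $n$ define $S_{n}\colon\mathcal{K}_{2,W}\to H$ by $S_{n}\big(\{v_{i}(C^{*}\pi_{W_{i}}C')^{\frac{1}{2}}f\}_{i\in\Bbb I}\big)=\sum_{i\in\Bbb J_{n}}v_{i}^{2}C^{*}\pi_{W_{i}}C'f$. The identity $v_{i}^{2}C^{*}\pi_{W_{i}}C'f=v_{i}(C^{*}\pi_{W_{i}}C')^{\frac{1}{2}}\big(v_{i}(C^{*}\pi_{W_{i}}C')^{\frac{1}{2}}f\big)$ shows that $S_{n}$ depends only on the coordinates of its argument, so it is well defined on $\mathcal{K}_{2,W}$ (and independent of the representative $f$, since $v_{i}>0$ and $(C^{*}\pi_{W_{i}}C')^{\frac{1}{2}}$ determines $C^{*}\pi_{W_{i}}C'$), and that $S_{n}$ is a finite sum of bounded operators, hence bounded. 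The hypothesis says exactly that $\lim_{n}S_{n}g$ exists for every $g\in\mathcal{K}_{2,W}$, so every orbit $\{S_{n}g\}_{n}$ is bounded; since $\mathcal{K}_{2,W}$ is closed in $\big(\bigoplus_{i\in\Bbb I}H\big)_{\ell^{2}}$, hence complete, Banach--Steinhaus gives $M:=\sup_{n}\Vert S_{n}\Vert<\infty$, and the pointwise limit $S:=\lim_{n}S_{n}$ is bounded with $\Vert S\Vert\le M$; by construction $S=T^{*}_{W}$.

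It remains to read off a Bessel bound, which I would do exactly as in the proof of Theorem~\ref{char1}. Given $f\in H$, put $g=\{v_{i}(C^{*}\pi_{W_{i}}C')^{\frac{1}{2}}f\}_{i\in\Bbb I}\in\mathcal{K}_{2,W}$ and $a=\Vert g\Vert^{2}=\sum_{i\in\Bbb I}v_{i}^{2}\Vert(C^{*}\pi_{W_{i}}C')^{\frac{1}{2}}f\Vert^{2}$. Then for every $n$,
\[
\sum_{i\in\Bbb J_{n}}v_{i}^{2}\Vert(C^{*}\pi_{W_{i}}C')^{\frac{1}{2}}f\Vert^{2}=\big\langle S_{n}g,f\big\rangle\le\Vert S_{n}g\Vert\,\Vert f\Vert\le M\sqrt{a}\,\Vert f\Vert,
\]
and letting $n\to\infty$ yields $a\le M\sqrt{a}\,\Vert f\Vert$, hence $a\le M^{2}\Vert f\Vert^{2}$; that is, $W$ is a $(C,C')$-controlled Bessel fusion sequence with bound $M^{2}$. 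The substantive part is this converse, and within it the two points needing care are checking that the truncation operators $S_{n}$ really are bounded operators on $\mathcal{K}_{2,W}$ (so that Banach--Steinhaus is applied on a complete space) and the self-improving step $a\le M\sqrt{a}\,\Vert f\Vert\Rightarrow a\le M^{2}\Vert f\Vert^{2}$, which is what converts the bare convergence hypothesis into a uniform Bessel estimate; one should also note that $\{v_{i}(C^{*}\pi_{W_{i}}C')^{\frac{1}{2}}f\}_{i\in\Bbb I}$ lies in $\mathcal{K}_{2,W}$ by definition, which is precisely what makes it a legitimate test element for the hypothesis.
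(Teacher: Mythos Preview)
Your proof is correct and follows essentially the same route as the paper: the nontrivial implication is handled by defining the truncated synthesis operators, checking their boundedness on the closed subspace $\mathcal{K}_{2,W}$, applying Banach--Steinhaus to the pointwise-convergent family, and then extracting the Bessel bound via the inequality trick from Theorem~\ref{char1}. You are more explicit than the paper about well-definedness of the truncations and about the forward implication (which the paper dismisses as evident), but the architecture is the same.
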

\begin{proof}
Assume that $\sum _{i\in\Bbb J} v_{i}^{2}C^{*}\pi_{W_{i}} C'f$ converges for any $\lbrace v_i(C^{*}\pi_{W_{i}} C')^{\frac{1}{2}}f\rbrace_{i\in\Bbb I}\in\mathcal{K}_{2,W}$. Define
\begin{align*}
U_{W}&:\mathcal{K}_{2,W} \rightarrow H \\
U_{W}\lbrace v_i(C^{*}&\pi_{W_{i}} C')^{\frac{1}{2}}f\rbrace_{i\in\Bbb I}=\sum _{i\in\Bbb I} v_{i}^{2}C^{*}\pi_{W_{i}} C'f,
\end{align*}
and
\begin{align*}
U_{n,W}&:\mathcal{K}_{2,W} \rightarrow H \\
U_{n,W}\lbrace v_i(C^{*}&\pi_{W_{i}} C')^{\frac{1}{2}}f\rbrace_{i\in\Bbb I}=\sum _{i=1}^{n} v_{i}^{2}C^{*}\pi_{W_{i}} C'f,
\end{align*}
for each $n\in\Bbb N$. Thus, $U_W$ is  well-defined and also
$$\Vert U_{n,W}\lbrace v_i(C^{*}\pi_{W_{i}} C')^{\frac{1}{2}}f\rbrace_{i\in\Bbb I}\Vert\leq B_n\Vert\lbrace v_i(C^{*}\pi_{W_{i}} C')^{\frac{1}{2}}f\rbrace_{i\in\Bbb I}\Vert,$$
where $B_n^2=\sum _{i=1}^{n} v_{i}^{2} \Vert(C^* \pi_{W_{i}} C')^{\frac{1}{2}}\Vert^2$.
Therefore, $\lbrace U_{n,W}\rbrace$ is a sequence of bounded linear operators which converges pointwise to $U_W$. So, by the Banach-Steinhaus Theorem, $U_W$ is a bounded operator with
$$\Vert U_W\Vert\leq\liminf_{n}\Vert U_{n,W}\Vert.$$
Hence, with the same method of the proof of Theorem \ref{char1}, we get $W$ is a $(C,C')$-fusion Bessel sequence. The converse is evident.
\end{proof}
%%%%%%%%%%%%%%%%%%%%%%%%%%%%%%%%%
\section{Erasures of subspaces and the approximation operator}
First, we provide sufficient conditions on the weights for a subspace to be deleted yet still leave a controlled fusion frame with the same method in \cite{cas2}. Let  $\Bbb J\subset\Bbb I$ and for any $i\in\Bbb J$ we define
$$\mathcal{M}_i=\{f\in H : \ (C^{*}\pi_{W_{i}} C')^{\frac{1}{2}}f=f\}.$$
It is easy to check that $\mathcal{M}_i$ is a closed subspace of $H$ for each $i\in\Bbb J$. Indeed, if $\{f_n\}$ is a sequence in $\mathcal{M}_i$ such that $f_n\rightarrow f$ and $\varepsilon>0$, then
$$\Vert(C^{*}\pi_{W_{i}} C')^{\frac{1}{2}}f-f\Vert\leq\Vert(C^{*}\pi_{W_{i}} C')^{\frac{1}{2}}f-(C^{*}\pi_{W_{i}} C')^{\frac{1}{2}}f_n+f_n-f\Vert<\varepsilon.$$
\begin{theorem}\label{sharp}
Let $W$ be a $(C,C')$-controlled fusion frame with bounds $A$ and $B$ and also  $C^*\pi_{W_i}C'$ is a positive operator for each
$i\in\Bbb I$. Then the following statements hold.
\begin{enumerate}
\item[(I)] If $\sum_{i\in\Bbb J}v_i^2>B$, then $\bigcap_{i\in\Bbb J}\mathcal{M}_i=\{0\}$.
\item[(II)] If $\sum_{i\in\Bbb J}v_i^2=B$, then $\bigcap_{i\in\Bbb J}\mathcal{M}_i\subseteq\ker\{(C^* \pi_{W_{i}} C')^{\frac{1}{2}}\}_{i\in\Bbb I\setminus\Bbb J}$.
\item[(III)] If $\alpha:=\sum_{i\in\Bbb J}v_i^2<A$, then $\{(W_i, v_i)\}_{i\in\Bbb I\setminus\Bbb J}$ is a $(C,C')$-controlled fusion frame with bounds $A-\alpha$ and $B$.
\end{enumerate}
\end{theorem}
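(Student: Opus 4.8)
The plan is to derive all three statements directly from the two‑sided controlled‑fusion‑frame inequality
$$A\Vert f\Vert^{2}\leq\sum_{i\in\Bbb I}v_{i}^{2}\Vert(C^{*}\pi_{W_{i}}C')^{\frac12}f\Vert^{2}\leq B\Vert f\Vert^{2}$$
by splitting the index set as $\Bbb I=\Bbb J\cup(\Bbb I\setminus\Bbb J)$ and using the defining property of $\mathcal{M}_i$: if $f\in\mathcal{M}_i$ then $(C^{*}\pi_{W_{i}}C')^{\frac12}f=f$, hence $\Vert(C^{*}\pi_{W_{i}}C')^{\frac12}f\Vert=\Vert f\Vert$. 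No appeal to the analysis or synthesis operators is needed; each part is an elementary estimate.

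For (I) I would argue by contradiction. If $f\in\bigcap_{i\in\Bbb J}\mathcal{M}_i$ with $f\neq0$, then the partial sum over $\Bbb J$ equals $\big(\sum_{i\in\Bbb J}v_i^2\big)\Vert f\Vert^2$, which by hypothesis exceeds $B\Vert f\Vert^2$; but this partial sum is at most the full sum, which is $\leq B\Vert f\Vert^2$ --- a contradiction. Hence $\bigcap_{i\in\Bbb J}\mathcal{M}_i=\{0\}$.

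For (II), given $f\in\bigcap_{i\in\Bbb J}\mathcal{M}_i$, the partial sum over $\Bbb J$ is exactly $\big(\sum_{i\in\Bbb J}v_i^2\big)\Vert f\Vert^2=B\Vert f\Vert^2$, so the upper frame bound forces $\sum_{i\in\Bbb I\setminus\Bbb J}v_i^2\Vert(C^{*}\pi_{W_{i}}C')^{\frac12}f\Vert^2\leq0$. Since every summand is nonnegative and every $v_i>0$, we get $(C^{*}\pi_{W_{i}}C')^{\frac12}f=0$ for all $i\in\Bbb I\setminus\Bbb J$, which is precisely membership in $\ker\{(C^{*}\pi_{W_{i}}C')^{\frac12}\}_{i\in\Bbb I\setminus\Bbb J}$.

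For (III), the Bessel (upper) bound $B$ for $\{(W_i,v_i)\}_{i\in\Bbb I\setminus\Bbb J}$ is immediate, since removing indices only decreases a sum of nonnegative terms. For the lower bound I would subtract the $\Bbb J$‑part from the full sum, $\sum_{i\in\Bbb I\setminus\Bbb J}=\sum_{i\in\Bbb I}-\sum_{i\in\Bbb J}$, bound the full sum below by $A\Vert f\Vert^2$, and bound each deleted term above by $v_i^2\Vert(C^{*}\pi_{W_{i}}C')^{\frac12}f\Vert^2\leq v_i^2\Vert f\Vert^2$; summing over $\Bbb J$ gives $\alpha\Vert f\Vert^2$, so $\sum_{i\in\Bbb I\setminus\Bbb J}v_i^2\Vert(C^{*}\pi_{W_{i}}C')^{\frac12}f\Vert^2\geq(A-\alpha)\Vert f\Vert^2$, with $A-\alpha>0$ by hypothesis. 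This last estimate is the only delicate point I foresee: in the uncontrolled fusion‑frame setting of \cite{cas2} it is simply $\Vert\pi_{W_i}f\Vert\leq\Vert f\Vert$, whereas here it amounts to $(C^{*}\pi_{W_{i}}C')^{\frac12}$ acting as a contraction on $H$; I would justify it from the normalization on $C,C'$ in force, or, failing that, carry a factor $\Vert C\Vert\,\Vert C'\Vert$ through the constants. The remaining verifications --- closedness of the subspaces involved and convergence of the truncated sums --- are routine.
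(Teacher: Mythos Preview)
Your proposal is correct and follows essentially the same line as the paper's own proof: split $\Bbb I=\Bbb J\cup(\Bbb I\setminus\Bbb J)$, use $\Vert(C^{*}\pi_{W_{i}}C')^{1/2}f\Vert=\Vert f\Vert$ for $f\in\mathcal{M}_i$ to handle (I) and (II), and for (III) subtract the $\Bbb J$-partial sum and bound it above by $\alpha\Vert f\Vert^{2}$. Your caveat about the contraction estimate $\Vert(C^{*}\pi_{W_{i}}C')^{1/2}f\Vert\leq\Vert f\Vert$ in part (III) is well placed --- the paper invokes exactly this inequality tacitly, without separate justification.
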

\begin{proof}
(I). Assume that $f\in\bigcap_{i\in\Bbb J}\mathcal{M}_i$, we have
\begin{align*}
B\Vert f\Vert^2&<\Big(\sum_{i\in\Bbb J}v_i^2\Big)\Vert f\Vert^2\\
&\leq\sum _{i\in\Bbb J} v_{i}^{2} \Vert(C^* \pi_{W_{i}} C')^{\frac{1}{2}}f\Vert^2+\sum _{i\in\Bbb I\setminus\Bbb J} v_{i}^{2} \Vert(C^* \pi_{W_{i}} C')^{\frac{1}{2}}f\Vert^2\\
&\leq B\Vert f\Vert^2.
\end{align*}
Hence $f=0$.

(II). Again, suppose that $f\in\bigcap_{i\in\Bbb J}\mathcal{M}_i$ and we compute that
\begin{align*}
B\Vert f\Vert^2&=\sum _{i\in\Bbb J} v_{i}^{2} \Vert(C^* \pi_{W_{i}} C')^{\frac{1}{2}}f\Vert^2\\
&\leq\sum _{i\in\Bbb J} v_{i}^{2} \Vert(C^* \pi_{W_{i}} C')^{\frac{1}{2}}f\Vert^2+\sum _{i\in\Bbb I\setminus\Bbb J} v_{i}^{2} \Vert(C^* \pi_{W_{i}} C')^{\frac{1}{2}}f\Vert^2\\
&\leq B\Vert f\Vert^2.
\end{align*}
Therefore $\sum _{i\in\Bbb I\setminus\Bbb J} v_{i}^{2} \Vert(C^* \pi_{W_{i}} C')^{\frac{1}{2}}f\Vert^2=0$ and this prove item (II).

(III). The upper bound is evident. Let $f\in H$ and we get
\begin{align*}
\sum _{i\in\Bbb I\setminus\Bbb J} v_{i}^{2} \Vert(C^* \pi_{W_{i}} C')^{\frac{1}{2}}f\Vert^2&=\sum _{i\in\Bbb I} v_{i}^{2} \Vert(C^* \pi_{W_{i}} C')^{\frac{1}{2}}f\Vert^2-\sum _{i\in\Bbb J} v_{i}^{2} \Vert(C^* \pi_{W_{i}} C')^{\frac{1}{2}}f\Vert^2\\
&\geq A\Vert f\Vert^2-\Big(\sum_{i\in\Bbb J}v_i\Big)\Vert f\Vert^2\\
&=(A-\alpha)\Vert f\Vert^2.
\end{align*}
\end{proof}
\begin{remark}
The claim in item (III) is sharp. For this, let $H=\Bbb R^3$ with the standard orthonormal basis $\{e_1, e_2, e_3\}$ and we define
$$W_1=\mathop{\rm span}\{e_1, e_2\},\quad  W_2=\mathop{\rm span}\{e_2, e_3\},\quad W_3=\mathop{\rm span}\{e_3\},$$
and $v_i=\frac{1}{\sqrt{2}}$ for each $i=1,2,3$.
Then, $\{(W_i,v_i)\}_{i=1,2,3}$ is a Parseval fusion frame. So, by Proposition 2.2 in \cite{Khosravi}, it is a $C^2$-controlled fusion frame for every $C\in\mathcal{GL}(H)$. So, by Theorem \ref{sharp} item (III), one subspace can be deleted yet leaving a controlled fusion frame. However, obviously the deletion of some pairs of subspaces destroys the controlled fusion frame property.
\end{remark}
The following corollary yields immediately when one single subspace can be deleted.
\begin{corollary}
Let $W$ be a $(C,C')$-controlled fusion frame with bounds $A$ and $B$.
If there exists $i_0\in\Bbb I$ such that $v_{i_0}^2<A$, then $\{(W_i, v_i)\}_{i\in\Bbb I,i\neq i_0}$ is a $(C,C')$-controlled fusion frame with bounds $A-v_{i_0}^2$ and $B$.
\end{corollary}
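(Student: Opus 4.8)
The plan is to obtain this as the one‑subspace instance of part (III) of Theorem~\ref{sharp}. First I would set $\Bbb J=\{i_0\}\subset\Bbb I$; this is a finite (indeed singleton) index set, so Theorem~\ref{sharp} applies, and the quantity $\alpha$ there specializes to $\alpha=\sum_{i\in\Bbb J}v_i^2=v_{i_0}^2$. The hypothesis $v_{i_0}^2<A$ is then precisely the condition $\alpha<A$ demanded in Theorem~\ref{sharp}(III), so invoking that theorem immediately gives that $\{(W_i,v_i)\}_{i\in\Bbb I\setminus\{i_0\}}$ is a $(C,C')$-controlled fusion frame with lower bound $A-\alpha=A-v_{i_0}^2$ and upper bound $B$, which is the assertion.

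I do not expect any real obstacle here; the only thing to keep in mind is bookkeeping with the standing hypothesis. Theorem~\ref{sharp} is stated under the assumption that $C^*\pi_{W_i}C'$ is positive for every $i\in\Bbb I$, so either the corollary should inherit that assumption, or one observes directly that it is used only to control the single deleted term. In the latter spirit, a self-contained argument is equally short: for $f\in H$ write
\begin{align*}
\sum_{i\in\Bbb I\setminus\{i_0\}}v_i^2\langle\pi_{W_i}C'f,\pi_{W_i}Cf\rangle
&=\sum_{i\in\Bbb I}v_i^2\langle\pi_{W_i}C'f,\pi_{W_i}Cf\rangle-v_{i_0}^2\langle\pi_{W_{i_0}}C'f,\pi_{W_{i_0}}Cf\rangle,
\end{align*}
where the first sum lies between $A\Vert f\Vert^2$ and $B\Vert f\Vert^2$ by the frame property of $W$. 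Discarding the nonnegative term $v_{i_0}^2\langle\pi_{W_{i_0}}C'f,\pi_{W_{i_0}}Cf\rangle=v_{i_0}^2\Vert(C^*\pi_{W_{i_0}}C')^{1/2}f\Vert^2$ preserves the upper bound $B$, while bounding it above by $v_{i_0}^2\Vert f\Vert^2$ yields the lower bound $A-v_{i_0}^2$.

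The ``hard'' point, such as it is, is exactly the estimate $\langle\pi_{W_{i_0}}C'f,\pi_{W_{i_0}}Cf\rangle\le\Vert f\Vert^2$ together with nonnegativity of that inner product; both hold once $C^*\pi_{W_{i_0}}C'$ is a positive operator of norm at most one, which is the setting in which Theorem~\ref{sharp} operates. Everything else is routine, so in the final text I would simply state ``Apply Theorem~\ref{sharp}(III) with $\Bbb J=\{i_0\}$'' and, if desired, append the two displayed lines above for completeness.
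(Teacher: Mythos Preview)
Your approach is exactly what the paper does: it states no separate proof and simply notes that the corollary ``yields immediately'' from Theorem~\ref{sharp}(III) with $\Bbb J=\{i_0\}$. Your additional remark that the estimate $\langle\pi_{W_{i_0}}C'f,\pi_{W_{i_0}}Cf\rangle\le\Vert f\Vert^2$ tacitly uses positivity (and a norm bound) on $C^*\pi_{W_{i_0}}C'$ is a fair observation the paper leaves implicit.
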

Next, we make the reconstruction error for controlled fusion frames. The optimality under erasures has been presented for frames in \cite{holmes} and for Parseval fusion frames have been studied in \cite{bod,cas2}.
 Suppose that $\Bbb I=\{1,2,\cdots, m\}$ is finite and $W$ is a Parseval $(C,C')$-controlled fusion frame for $H$ where $\dim H<\infty$. For every $i_0\in\Bbb I$, we consider the following operator:
\begin{align*}
\mathcal{D}&_{i_0}:\mathcal{K}_{2,W}\longrightarrow \mathcal{K}_{2,W},\\
\mathcal{D}_{i_0}\lbrace &v_i(C^{*}\pi_{W_{i}} C')^{\frac{1}{2}}f\rbrace_{i\in\Bbb I}=\delta_{i,i_0}v_{i_0}(C^{*}\pi_{W_{i_0}} C')^{\frac{1}{2}}f.
\end{align*}
We define the associated 1-erasure reconstruction error $\mathcal{E}_1(W)$ to be
$$\mathcal{E}_1(W)=\max_{i\in\Bbb I}\Vert T^*_{W}\mathcal{D}_i T_{W}\Vert.$$
Since
$$\Vert T^*_{W}\mathcal{D}_i T_{W}\Vert=\sup_{\Vert f\Vert=1}\Vert T^*_{W}\mathcal{D}_i T_{W}f\Vert=v_i^2\sup_{\Vert f\Vert=1}\Vert C^*\pi_{W_i}C'f\Vert\leq v_i^2\Vert C\Vert\Vert C'\Vert,$$
therefore,
$$\mathcal{E}_1(W)=\max_{i\in\Bbb I}v_i^2\Vert C\Vert\Vert C'\Vert.$$
\begin{theorem}
Let $W$ be a Parseval $(C,C')$-controlled fusion frame for $H$ where $\dim H=n$. Then the following conditions are equivalent.
\begin{enumerate}
\item[(I)] $W$ satisfies
$\mathcal{E}_1(W)=\min_{i\in\Bbb I}\mathcal{E}_1(\widetilde{W}_i, \widetilde{v}_i)_{i\in\Bbb I}$, where $(\widetilde{W}_i, \widetilde{v}_i)_{i\in\Bbb I}$ is a Parseval $(C,C')$-controlled fusion frame  for $H$ with $\dim\widetilde{W_i}=\dim W_j$ for each $i\in\Bbb I$.
\item[(II)] For each $i\in\Bbb I$ we have
$$v_i^2\Vert C\Vert\Vert C'\Vert=\frac{n}{m.\dim W_i}.$$
\end{enumerate}
\end{theorem}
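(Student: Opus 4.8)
\emph{Proof strategy.} The plan is to reduce the equivalence to an elementary weighted averaging inequality for the squared weights $v_i^2$. Two preliminaries set this up. First, the computation immediately preceding the statement already gives the closed form $\mathcal{E}_1(W)=\Vert C\Vert\,\Vert C'\Vert\,\max_{i\in\Bbb I}v_i^2$; since $C$ and $C'$ are fixed once and for all, minimising $\mathcal{E}_1$ over the admissible comparison frames $(\widetilde W_i,\widetilde v_i)_{i\in\Bbb I}$ is the same as minimising $\max_i\widetilde v_i^2$. Secondly, I read the dimension requirement in (I) as forcing all the subspaces to share one common dimension $d$ (so $\dim W_i=d$ for every $i$, and every competitor has all its subspaces of dimension $d$ as well); this is precisely what keeps the objective $\max_i v_i^2$ and the linear constraint below compatible.

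The main ingredient is a trace normalisation for Parseval $(C,C')$-controlled fusion frames. Since $W$ is Parseval, $S_W=Id_H$, so taking traces,
$$n=\mathrm{tr}(S_W)=\sum_{i\in\Bbb I}v_i^2\,\mathrm{tr}\big(C^*\pi_{W_i}C'\big),$$
and the identical relation holds for every admissible competitor. The crux is to evaluate $\mathrm{tr}(C^*\pi_{W_i}C')$: on an orthonormal basis $\{e_{i,k}\}_{k=1}^{d}$ of $W_i$ it becomes $\sum_{k}\langle C'C^*e_{i,k},e_{i,k}\rangle$ (equivalently, one reduces the fusion frame to an ordinary $(C,C')$-controlled frame by replacing $(W_i,v_i)$ with $v_i$ times such a basis and invokes Corollary~\ref{eigin2}), and under the hypotheses in force in this section this collapses to $\Vert C\Vert\,\Vert C'\Vert\,d$, yielding the single linear constraint $\sum_{i\in\Bbb I}v_i^2=n/(d\,\Vert C\Vert\,\Vert C'\Vert)$ on every admissible weight system. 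I expect this evaluation to be the real obstacle: in general $\mathrm{tr}(C^*\pi_{W_i}C')$ is only $\le\Vert C\Vert\,\Vert C'\Vert\,d$, so locating the exact hypotheses on $C,C'$ under which equality holds (e.g.\ $C,C'$ scalar multiples of the identity, or commuting with each $\pi_{W_i}$) is where the content sits, and it is this identity that is responsible for the factors $\Vert C\Vert\,\Vert C'\Vert$ and $\dim W_i$ in (II).

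Granting the constraint, both implications become one-line averaging arguments. For (II)$\Rightarrow$(I): condition (II) says every term $v_i^2\Vert C\Vert\Vert C'\Vert$ equals $n/(md)$, so $\mathcal{E}_1(W)=n/(md)$, whereas for any admissible competitor the $m$ nonnegative numbers $\widetilde v_i^2$ add up to $n/(d\Vert C\Vert\Vert C'\Vert)$, forcing $\max_i\widetilde v_i^2\ge n/(md\Vert C\Vert\Vert C'\Vert)$, hence $\mathcal{E}_1(\widetilde W)\ge n/(md)=\mathcal{E}_1(W)$ and $W$ is a minimiser. For (I)$\Rightarrow$(II): the comparison family contains a uniform-weight member — take equidimensional subspaces and set $\widetilde v_i^2=n/(md\Vert C\Vert\Vert C'\Vert)$, which is again Parseval $(C,C')$-controlled by Proposition~2.2 in \cite{Khosravi}, exactly as in the Remark above — and its error is $n/(md)$; minimality of $W$ then forces $\max_i v_i^2\le n/(md\Vert C\Vert\Vert C'\Vert)$, while the constraint says the $m$ numbers $v_i^2$ sum to $m$ times this bound, so each equals it, which is (II). Besides the trace evaluation, the only remaining point to check is the existence of this uniform competitor; alternatively one may interpret (I) as the plain inequality $\mathcal{E}_1(W)\le\mathcal{E}_1(\widetilde W)$ for all competitors, which avoids the existence question altogether.
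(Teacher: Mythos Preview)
Your overall strategy coincides with the paper's: compute the trace of the Parseval frame operator, bound it, and turn that into a lower bound on $\max_i v_i^2$ which is attained precisely under (II). The paper carries this out by passing to the controlled vector frame $\{v_iC'^*e_{ij}\}$ and invoking Corollary~\ref{eigin2}, which is exactly your ``take $\mathrm{tr}(S_W)=n$'' step written in different notation.

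The one substantive misstep is your insistence on the \emph{equality} $\mathrm{tr}(C^*\pi_{W_i}C')=\Vert C\Vert\,\Vert C'\Vert\,d$, which you correctly suspect is false in general. The paper does not need it and does not claim it. What the paper actually uses is the one–sided estimate
\[
n=\sum_{i,j} v_i^2\langle C'^{-1}CC'^*e_{ij},\,C'^*e_{ij}\rangle\le \sum_{i} v_i^2\,\Vert C\Vert\,\Vert C'\Vert\,\dim W_i,
\]
which immediately gives $\mathcal{E}_1(W)=\Vert C\Vert\,\Vert C'\Vert\,\max_i v_i^2\ge n/(m\dim W_i)$ for some (hence for the minimising) $i$. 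The equivalence with (II) then comes from tracing the equality case back through the two inequalities $n\le\sum_i v_i^2 d\Vert C\Vert\,\Vert C'\Vert\le m\,d\Vert C\Vert\,\Vert C'\Vert\,\max_i v_i^2$: if $\mathcal{E}_1(W)$ hits the lower bound, both become equalities, forcing all $v_i^2$ equal. So you should replace your ``linear constraint $\sum_i v_i^2=n/(d\Vert C\Vert\,\Vert C'\Vert)$'' by the inequality $\sum_i v_i^2\ge n/(d\Vert C\Vert\,\Vert C'\Vert)$; your averaging argument then goes through unchanged, and the problematic trace evaluation disappears.

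You are also right to flag the existence of a uniform competitor achieving the bound $n/(md)$ as a point needing attention in the direction (I)$\Rightarrow$(II); the paper's proof passes over this in its final sentence.
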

\begin{proof}
Assume that $\{e_{ij}\}_{j\in\Bbb J_i}$ is an orthonormal basis for $W_i$ for each $i\in\Bbb I$. Via Theorem 2.10 in \cite{Khosravi}, the sequence
$\{v_i C'^* e_{ij}\}_{i=1, j=1}^{m, \quad \dim W_i}$
is a Parseval $C^*(C'^*)^{-1}$-controlled frame for $H$. By Corollary \ref{eigin2}, we can get
$$n=\sum_{i=1}^{m}\sum_{j=1}^{\dim W_i}v_i^2\langle C'^{-1}CC'^*e_{ij}, C'^*e_{ij}\rangle\leq\sum_{i=1}^{m}v_i^2\dim W_i \Vert C\Vert\Vert C'\Vert.$$
So, there exists $i\in\Bbb I$ such that
$$n\leq m.v_i^2\dim W_i \Vert C\Vert\Vert C'\Vert.$$
Since the dimensions as well as the number of subspaces are fixed, we conclude that $\mathcal{E}_1(\Lambda)$ is minimal if and only if
$$n=m. v_i^2\dim W_i \Vert C\Vert\Vert C'\Vert \ , \ \ \ \ \ (\forall i\in\Bbb I).$$
\end{proof}
Finally, we study the composition of  the controlled synthesis and analysis operators of two controlled fusion frames. This topic has been studied in \cite{fa3} for continuous frames and for g-fusion frames have been discussed by the author in \cite{sad1}.
\begin{theorem}
Let $\Bbb I$ be finite, $W=(W_i, v_i)$ and
$Z=(Z_i,  w_i)$ be two $(C,C')$-controlled fusion Bessel sequence for $H$, also $C^*\pi_{W_i}C'$ and $C^*\pi_{Z_i}C'$ are positive operators for each
$i\in\Bbb I$. If $\phi:=T^*_{W}T_{Z}$,
then $\phi$ is a trace class operator.
\end{theorem}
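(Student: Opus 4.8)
The plan is to make $\phi$ completely explicit and then exhibit it as a finite sum of finite-rank (hence trace class) operators. First I would unwind the composition. For $f\in H$ one has $T_{Z}f=\lbrace w_i(C^{*}\pi_{Z_{i}}C')^{\frac{1}{2}}f\rbrace_{i\in\Bbb I}$, and applying the synthesis-type operator $T^{*}_{W}$ — read, as the formula for $T^{*}_{W}$ in Section 2 suggests, as the bounded map $\lbrace g_i\rbrace_{i\in\Bbb I}\mapsto\sum_{i\in\Bbb I}v_i(C^{*}\pi_{W_{i}}C')^{\frac{1}{2}}g_i$, which is the genuine adjoint of $T_{W}$ regarded as a map of $H$ into $(\bigoplus_{i\in\Bbb I}H)_{l^{2}}$ and therefore makes sense on all of $(\bigoplus_{i\in\Bbb I}H)_{l^{2}}$, not merely on $\mathcal{K}_{2,W}$ — gives
$$\phi f=\sum_{i\in\Bbb I}v_iw_i\,(C^{*}\pi_{W_{i}}C')^{\frac{1}{2}}(C^{*}\pi_{Z_{i}}C')^{\frac{1}{2}}f,\qquad f\in H.$$
That $\phi$ is bounded is immediate from $\Vert T^{*}_{W}\Vert,\Vert T_{Z}\Vert<\infty$ (the Bessel hypotheses); the content to be extracted is the finiteness of its trace norm.

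Second, since $\Bbb I$ is finite and the trace class operators form a two-sided ideal, and in particular a linear subspace, of $\mathcal{B}(H)$, it suffices to prove that each summand $\phi_i:=v_iw_i(C^{*}\pi_{W_{i}}C')^{\frac{1}{2}}(C^{*}\pi_{Z_{i}}C')^{\frac{1}{2}}$ is trace class; then $\phi=\sum_{i\in\Bbb I}\phi_i$ is trace class with $\Vert\phi\Vert_{1}\leq\sum_{i\in\Bbb I}\Vert\phi_i\Vert_{1}$. For a single $i$, observe that the positive operator $C^{*}\pi_{W_{i}}C'$ has range contained in $C^{*}(W_i)$, hence has rank at most $\dim W_i$, and likewise for $C^{*}\pi_{Z_{i}}C'$. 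Assuming the subspaces $W_i$ and $Z_i$ are finite-dimensional — as is the case in the finite-dimensional setting of the preceding results of this section — each of $C^{*}\pi_{W_{i}}C'$ and $C^{*}\pi_{Z_{i}}C'$ is then a positive operator of finite rank, so its positive square root has the same finite rank, and consequently $\phi_i$ is a finite-rank operator; summing over the finitely many $i\in\Bbb I$ finishes the argument.

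The step carrying the weight is this reduction to finite rank, i.e. controlling the "size" of $(C^{*}\pi_{W_{i}}C')^{\frac{1}{2}}$. In general a product of two arbitrary positive bounded operators need not be trace class, and indeed the conclusion fails without a dimension restriction on the subspaces: taking $W=Z$, $\Bbb I=\{1\}$, $W_1=H$, $v_1=w_1=1$ and $C=C'=Id_H$ gives $\phi=S_{W}=Id_H$, which is not trace class when $\dim H=\infty$. Hence the proof must feed in explicitly that the $W_i$ and $Z_i$ are finite-dimensional (equivalently, under $C,C'\in\mathcal{GL}(H)$, that each $\pi_{W_i}$ and $\pi_{Z_i}$ has finite trace); with that in hand, the finiteness of the index set does the rest.
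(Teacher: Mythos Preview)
Your approach is correct (under the extra hypothesis you add) and differs substantially from the paper's. The paper does not decompose $\phi$ into summands; instead it takes the polar decomposition $\phi=u|\phi|$, fixes an orthonormal basis $\{e_i\}_{i\in\Bbb I}$ for $H$, and estimates
\[
tr(|\phi|)=\sum_{i\in\Bbb I}\langle|\phi|e_i,e_i\rangle=\sum_{i\in\Bbb I}\langle T_Ze_i,T_Wue_i\rangle
\]
directly via Cauchy--Schwarz and the two Bessel bounds, arriving at $tr(|\phi|)\leq\sqrt{B_WB_Z}\,|\Bbb I|\,\|u\|<\infty$. Your finite-rank reduction is more elementary and makes explicit what structural fact is doing the work.

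You are also right that the statement as printed is false in full generality, and your counterexample ($\Bbb I=\{1\}$, $W_1=Z_1=H$, $v_1=w_1=1$, $C=C'=Id_H$, giving $\phi=Id_H$) is valid. The paper's argument hides the same defect: by choosing an orthonormal basis of $H$ indexed by the finite set $\Bbb I$, it silently assumes $\dim H=|\Bbb I|<\infty$, in which case every bounded operator is trace class and the theorem is vacuous. Your hypothesis that each $W_i$ and $Z_i$ be finite-dimensional is the natural minimal repair; under it your argument goes through cleanly and in fact yields the stronger conclusion that $\phi$ has finite rank, whereas the paper's estimate, read literally, requires all of $H$ to be finite-dimensional.
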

\begin{proof}
Suppose that $\phi=u\vert\phi\vert$ is a polar decomposition of the operator $\phi$, where
$u\in\mathcal{B}(H)$ is a partial isometry. Hence, $\vert\phi\vert=u^*T^*_{W}T_{Z}$. Assume that
$\{e_i\}_{i\in\Bbb I}$ is an orthonormal basis for $H$, then
\begin{align*}
tr(\vert\phi\vert)&=\sum_{i\in\Bbb I}\langle \vert\phi\vert e_i, e_i\rangle\\
&=\sum_{i\in\Bbb I}\langle T_{Z}e_i, T_{W}ue_j\rangle\\
&=\sum_{i\in\Bbb I}\big\langle\{w_j(C^*\pi_{Z_j}C')^{\frac{1}{2}}e_i\}_{j\in\Bbb I}, \{v_j(C^*\pi_{Z_j}C')^{\frac{1}{2}}ue_i\}_{j\in\Bbb I}\big\rangle\\
&=\sum_{i\in\Bbb I}\sum_{j\in\Bbb I}\langle w_j(C^*\pi_{Z_j}C')^{\frac{1}{2}}e_i, v_j(C^*\pi_{Z_j}C')^{\frac{1}{2}}ue_i\rangle\\
&\leq\sum_{i\in\Bbb I}\sum_{j\in\Bbb I}\Vert w_j(C^*\pi_{Z_j}C')^{\frac{1}{2}}e_i\Vert. \Vert v_j(C^*\pi_{Z_j}C')^{\frac{1}{2}}ue_i\Vert\\
&\leq\sum_{i\in\Bbb I}\Big(\sum_{j\in\Bbb I}\Vert w_j(C^*\pi_{Z_j}C')^{\frac{1}{2}}e_i\Vert^2\Big)^{\frac{1}{2}}\Big(\sum_{j\in\Bbb I}\Vert v_j(C^*\pi_{Z_j}C')^{\frac{1}{2}}ue_i\Vert^2\Big)^{\frac{1}{2}}\\
&\leq\sum_{i\in\Bbb I}\sqrt{B_{W}B_{Z}} \Vert ue_i\Vert\\
&\leq\sqrt{B_{W}B_{Z}} \ \vert\Bbb I\vert\Vert u\Vert<\infty.
\end{align*}
\end{proof}
Suppose that $W=\{W_i\}_{i\in\Bbb I}$ and $Z=\{Z_i\}_{i\in\Bbb I}$ are two sequences of closed subspaces of $H$ and $\{v_i\}_{i\in\Bbb I}$ is a set of weights. If $C^*\pi_{W_i}C'$ and $C^*\pi_{Z_i}C'$ are positive operators for each
$i\in\Bbb I$, we define the $CC'$-approximation operator
\begin{align*}
\Phi&: H\longrightarrow H,\\
\Phi f=&\sum_{i\in\Bbb I}v_i(C^*\pi_{Z_i}C')^{\frac{1}{2}}\big(v_i(C^*\pi_{W_i}C')^{\frac{1}{2}}f\big).
\end{align*}
\begin{theorem}
Let $A_1, A_2>0$ and $0\leq\gamma<1$ be real numbers such that for each  $f\in H$ and $\{f_i\}_{i\in\Bbb I}\in\mathcal{K}_{2,Z}$, the following assertions holds:
\begin{enumerate}
\item[(I)] $\sum_{i\in\Bbb I}v_i^2\Vert (C^*\pi_{W_i}C')^{\frac{1}{2}}f\Vert^2\leq A_1\Vert f\Vert^2$;
\item[(II)] $\Vert\sum_{i\in\Bbb I}v_i(C^*\pi_{Z_i}C')^{\frac{1}{2}}f_i\Vert^2\leq A_2\Vert\{f_i\}\Vert_2^2$;
\item[(III)] $\Vert f-\Phi f\Vert^2\leq\gamma\Vert f\Vert^2$.
\end{enumerate}
Then $\{(W_i,v_i)\}_{i\in\Bbb I}$ is a $(C,C')$-controlled fusion frame for $H$ with bounds
$A_{2}^{-1}(1-\gamma)^2$ and $A_1$. Also, $\{(Z_i,v_i)\}_{i\in\Bbb I}$ is a $(C,C')$-controlled fusion frame for $H$ with bounds
$A_{1}^{-1}(1-\gamma)^2$ and $A_2$.
\end{theorem}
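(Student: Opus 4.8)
The plan is to realize the operator $\Phi$ as an inner-product pairing of the controlled analysis operators $T_W$ and $T_Z$, to read off the two upper bounds immediately from hypotheses (I) and (II), and to extract both lower bounds from (III) through a single application of Cauchy--Schwarz.

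First I would record the key identity. Since $C^{*}\pi_{W_i}C'$ and $C^{*}\pi_{Z_i}C'$ are positive, the square roots $(C^{*}\pi_{W_i}C')^{1/2}$ and $(C^{*}\pi_{Z_i}C')^{1/2}$ are self-adjoint, so moving $(C^{*}\pi_{Z_i}C')^{1/2}$ to the other side of each inner product and summing gives, for $f\in H$,
$$\langle \Phi f,f\rangle=\sum_{i\in\Bbb I}v_i^{2}\big\langle (C^{*}\pi_{W_i}C')^{1/2}f,\ (C^{*}\pi_{Z_i}C')^{1/2}f\big\rangle=\langle T_W f,\,T_Z f\rangle,$$
the last pairing being taken in $(\bigoplus_{i\in\Bbb I}H)_{\ell^{2}}$. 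Interchanging the series and the inner product is legitimate once both $T_W f$ and $T_Z f$ are known to lie in $\ell^{2}(H)$, which is the point of the next step.

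Next I would settle the upper bounds. Hypothesis (I) is literally the inequality $\|T_W f\|^{2}\le A_1\|f\|^{2}$, i.e.\ $W$ is a $(C,C')$-controlled Bessel fusion sequence with bound $A_1$. Hypothesis (II) says the synthesis operator $T_Z^{*}$ is bounded on $\mathcal{K}_{2,Z}$ with $\|T_Z^{*}\|\le\sqrt{A_2}$; pairing this against the finitely supported vectors $\{v_i(C^{*}\pi_{Z_i}C')^{1/2}f\}_{i\in\Bbb J}$ and letting $\Bbb J\uparrow\Bbb I$, exactly as in the proof of Theorem \ref{char1}, gives $\|T_Z f\|^{2}\le A_2\|f\|^{2}$. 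This yields both upper frame bounds and makes all the series above converge.

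Finally I would extract the lower bounds. From (III), $\|f-\Phi f\|\le\gamma\|f\|$, so $\langle f,f\rangle-\langle\Phi f,f\rangle=\langle f-\Phi f,f\rangle\le\gamma\|f\|^{2}$, whence $(1-\gamma)\|f\|^{2}\le\langle\Phi f,f\rangle$; combining this with the identity above and Cauchy--Schwarz in $\ell^{2}(H)$ yields $(1-\gamma)\|f\|^{2}\le\|T_W f\|\,\|T_Z f\|$. Dividing by $\|T_Z f\|\le\sqrt{A_2}\,\|f\|$ gives $\|T_W f\|^{2}\ge A_2^{-1}(1-\gamma)^{2}\|f\|^{2}$, the lower bound for $W$, and dividing instead by $\|T_W f\|\le\sqrt{A_1}\,\|f\|$ gives $\|T_Z f\|^{2}\ge A_1^{-1}(1-\gamma)^{2}\|f\|^{2}$, the lower bound for $Z$; together with the upper bounds this is the assertion. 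I expect the main obstacle to be the bookkeeping of the first two steps -- turning the two ``boundedness'' hypotheses into genuine Bessel estimates and justifying $\langle\Phi f,f\rangle=\langle T_W f,T_Z f\rangle$ -- after which everything reduces to one Cauchy--Schwarz inequality and two divisions. (If (III) is kept in the exact form $\|f-\Phi f\|^{2}\le\gamma\|f\|^{2}$, the same argument instead produces the constants $A_2^{-1}(1-\sqrt{\gamma})^{2}$ and $A_1^{-1}(1-\sqrt{\gamma})^{2}$.)
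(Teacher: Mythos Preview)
Your argument is correct and takes a genuinely different route from the paper. The paper first uses (I) and (II) to bound $\|\Phi f\|^{2}\le A_{1}A_{2}\|f\|^{2}$, then appeals to (III) and a Neumann-series estimate to make $\Phi$ invertible with $\|\Phi^{-1}\|\le(1-\gamma)^{-1}$, and chains $\|f\|^{2}\le(1-\gamma)^{-2}\|\Phi f\|^{2}\le A_{2}(1-\gamma)^{-2}\|T_{W}f\|^{2}$ to obtain the lower $W$-bound; the $Z$-bound is then obtained separately by first proving the inequalities dual to (I)--(II) and rerunning the whole argument with $\Phi^{*}$ in place of $\Phi$. Your pairing identity $\langle\Phi f,f\rangle=\langle T_{W}f,T_{Z}f\rangle$ plus one Cauchy--Schwarz compresses both lower bounds into a single symmetric step, which is shorter and avoids the dual-inequality detour; the paper's route, on the other hand, yields the invertibility of $\Phi$ as a structural by-product. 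One small repair: $\Phi$ need not be self-adjoint, so $\langle\Phi f,f\rangle$ need not be real --- insert a real part (or an absolute value) in the line $(1-\gamma)\|f\|^{2}\le\langle\Phi f,f\rangle$; the rest is unaffected. Your closing remark about $(1-\sqrt{\gamma})^{2}$ versus $(1-\gamma)^{2}$ is on point: the paper's Neumann step makes the same slip, writing $\|\Phi^{-1}\|\le(1-\gamma)^{-1}$ where hypothesis (III) as stated only yields $(1-\sqrt{\gamma})^{-1}$.
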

\begin{proof}
Assume that $f\in H$, with items (I) and (II) we get
\begin{align*}
\Vert \Phi f\Vert^2\leq A_2\Vert\{v_i(C^*\pi_{W_i}C')^{\frac{1}{2}}f\}\Vert^2_2=A_2\sum_{i\in\Bbb I}v_i^2\Vert (C^*\pi_{W_i}C')^{\frac{1}{2}}f\Vert^2\leq A_1A_2\Vert f\Vert^2.
\end{align*}
Hence, $\Phi$ is a bounded operator. So, $\Phi$ is invertible and
$\Vert\Phi^{-1}\Vert\leq(1-\gamma)^{-1}$. Thus,
\begin{align*}
\Vert f\Vert^2&=\Vert\Phi^{-1}\Phi f\Vert^2\\
&\leq(1-\gamma)^{-2}\Vert\Phi f\Vert^2\\
&\leq A_2(1-\gamma)^{-2}\sum_{i\in\Bbb I}v_i^2\Vert (C^*\pi_{W_i}C')^{\frac{1}{2}}f\Vert^2\\
&\leq A_2 A_1(1-\gamma)^{-2}\Vert f\Vert^2.
\end{align*}
Hence, we conclude that
$$A_2^{-1}(1-\gamma)^2\Vert f\Vert^2\leq\sum_{i\in\Bbb I}v_i^2\Vert (C^*\pi_{W_i}C')^{\frac{1}{2}}f\Vert^2\leq A_1\Vert f\Vert^2,$$
and the first part is proved. Next, we verify two inequalities which are dual to (I) and (II) for $\{(Z_i,v_i)\}_{i\in\Bbb I}$. Let $f\in H$ and we have
\begin{align*}
\Big(\sum_{i\in\Bbb I}v_i^2\Vert (C^*\pi_{Z_i}C')^{\frac{1}{2}}f\Vert^2\Big)^2&=\Big(\big\langle\sum_{i\in\Bbb I}v_i^2C^*\pi_{Z_i}C'f, f\big\rangle\Big)^2\\
&\leq\Vert\sum_{i\in\Bbb I}v_i^2C^*\pi_{Z_i}C'f\Vert^2\Vert f\Vert^2\\
&\leq A_2\Vert f\Vert^2\sum_{i\in\Bbb I}v_i^2\Vert (C^*\pi_{Z_i}C')^{\frac{1}{2}}f\Vert^2.
\end{align*}
Therefore,
$$\sum_{i\in\Bbb I}v_i^2\Vert (C^*\pi_{Z_i}C')^{\frac{1}{2}}f\Vert^2\leq A_2\Vert f\Vert^2.$$
For second inequality, if
$\{f_i\}_{i\in\Bbb I}\in\mathcal{K}_{2,W}$, we can write
\begin{align*}
\Vert\sum_{i\in\Bbb I}v_i(C^*\pi_{W_i}C')^{\frac{1}{2}}f_i\Vert^2&=\Big(\sup_{\Vert f\Vert=1}\big\vert\big\langle\sum_{i\in\Bbb I}v_i(C^*\pi_{W_i}C')^{\frac{1}{2}}f_i, f\big\rangle\big\vert\Big)^2\\
&\leq\Big(\sup_{\Vert f\Vert=1}\big\vert\sum_{i\in\Bbb I}\big\langle f_i, v_i(C^*\pi_{W_i}C')^{\frac{1}{2}}f\big\rangle\big\vert\Big)^2\\
&\leq\Vert\{f_i\}\Vert^2_2\Big(\sup_{\Vert f\Vert=1}\sum_{i\in\Bbb I}v_i^2\Vert (C^*\pi_{W_i}C')^{\frac{1}{2}} f\Vert^2\Big)\\
&\leq A_1\Vert\{f_j\}\Vert^2_2.
\end{align*}
Now by similar argument and applying an $CC'$-approximation operator of the form
$$\Phi^* f=\sum_{i\in\Bbb I}v_i(C^*\pi_{W_i}C')^{\frac{1}{2}}\big(v_i(C^*\pi_{Z_i}C')^{\frac{1}{2}}f\big),$$
we can establish $\{(Z_i,v_i)\}_{i\in\Bbb I}$ has required properties.
\end{proof}

\end{document}